\documentclass[11pt]{amsart}
\usepackage{amsfonts}
\usepackage{dsfont}

\usepackage[a4paper, margin=2cm]{geometry}

\newtheorem{thm}{Theorem}

\newtheorem{lem}[thm]{Lemma}

\newtheorem{conj}[thm]{Conjecture}

\theoremstyle{definition}
\newtheorem{defn}[thm]{Definition}
\theoremstyle{remark}

\begin{document}

\title[]{Binary words containing few abelian squares}

\author{Szil{\'a}rd Zsolt Fazekas}
\address{Akita University\newline 
\indent Japan}
\email{ <szilard.fazekas@gmail.com>}

\author{Adam Mammoliti}
\address{School of Mathematics and Statistics\newline
\indent University of New South Wales\newline
\indent NSW 2052\newline
\indent Australia}
\email{adam.mammoliti@outlook.com.au}

\author{Robert Merca{\c s}}
\address{Loughborough University \newline
\indent UK}
\email{R.G.Mercas@lboro.ac.uk}

\author{Jamie Simpson}%
\address{130 Preston Point Rd \newline
\indent East Fremantle,\newline
\indent WA 6158\newline
\indent Australia}%
\email{jamiesimpson320@gmail.com}%

\begin{abstract} 
In \cite{FS} Fici and Saarela conjectured that a binary word of length $n$ contains at least $\lfloor n/4 \rfloor$ abelian squares.  We slightly extend this conjecture and show that it holds in some special cases. In all other cases we have the following: given a Parikh vector over a two letter alphabet we produce a word with that Parikh vector which we conjecture contains the least possible number of abelian squares.

\noindent \emph{Keywords:} abelian square, word.
\end{abstract}

\maketitle

\section{Introduction}
 We use the usual notation for combinatorics of words. A \emph{word} is the concatenation of letters taken from an \emph{alphabet}.  In this paper we use the alphabet $\{a,b\}$, hence our words are \emph{binary}.  A word containing $n$
letters is $w=w[1 \dots n]$, with $w[i]$ being the $i$th letter and
$w[i\dots j]$ being the \emph{factor} beginning at position $i$ and ending at
position $j$.  If $i=1$ then the factor is a \emph{prefix} and if
$j=n$ then the factor is a \emph{suffix}.

The \emph{length} of $w$, written $|w|$, is the
number of letters that $w$ contains and $|w|_a$ and $|w|_b$ are, respectively, the numbers of $a$'s and the number of $b$'s in $w$ (this notation has the obvious extension to larger alphabets). The vector $[|w|_a,|w|_b]$ is the \emph{Parikh vector} of $w$, written $\mathcal{P}(w)$.  If $w=uv$ where $u$ and $v$ are words then $vu$ is a
\emph{conjugate} of $w$.  The \emph{complement} of $w$ is the word formed from $w$ by replacing each $a$ with $b$ and vice versa. The \emph{reverse} of $w$ is the word formed by writing $w$ backwards.  Thus the complement and the reverse of $abaab$ are $babba$ and $baaba$ respectively. 

A factor $w[i+1..i+2p]$ in which $w[i+1..i+p]=w[i+p+1..i+2p]$ is  a \emph{square}. If instead  $\mathcal{P}(w[i+1..i+p])=\mathcal{P}(w[i+p+1..i+2p])$ then $w[i+1..i+2p]$ is an \emph{abelian square}.  In older literature, for instance \cite{FS}, it was a \emph{weak square} \cite{FSP}. Thus the word $abaababa$ contains the squares $aa$, $abab$, $abaaba$, $baba$ which are also abelian squares and the abelian squares $baab$ and $aababa$. Clearly a word, its complement and its reverse each contain the same number of abelian squares.
If $w$ is a binary word then $\theta(w)$ is the number of abelian squares in $w$. The following conjecture is due to Fici and Saarela \cite{FS}. 
\begin{conj}\label{FSconj} Any binary word of length $n$ contains at least $\lfloor \frac{n}{4}\rfloor$ abelian squares.
\end{conj} 
Following Fici and Saarela we say that an abelian square of the form $a^{2i}$ or $b^{2i}$ for some positive integer $i$ is \emph{trivial}. They proved the following theorem.
\begin{thm}\label{FSthm} \cite{FS}
  If a binary word $w$ contains only trivial abelian squares then it satisfies Conjecture \ref{FSconj}.
\end{thm}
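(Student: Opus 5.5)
The plan is to classify completely the binary words whose only abelian squares are trivial, and then count distinct trivial squares directly. If $w$ contains only trivial abelian squares, then its abelian square factors are exactly the words $a^{2i}$ with $2i\le A$ and $b^{2i}$ with $2i\le B$, where $A$ and $B$ are the lengths of the longest runs of $a$ and of $b$ in $w$. So $\theta(w)=\lfloor A/2\rfloor+\lfloor B/2\rfloor$, and it suffices to show this is at least $\lfloor n/4\rfloor$. Since complement and reversal preserve everything, I will normalise freely.

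\textbf{Step 1 (structural lemma).} I claim that, up to complement, $w=a^k b^j a^m$ with $k,m\ge 0$, $j\ge 0$, and $j$ odd whenever $k,m\ge1$.

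First I rule out two nonempty $b$-runs separated by an $a$-run, that is, a factor $b a^m b$ with $m\ge1$ lying inside $a^k b a^m b a^l$. Here I look for a factor $a^x b a^y\,a^{m-y} b a^z$ with $x\le k$, $z\le l$, $0\le y\le m$. Its two halves have equal Parikh vectors exactly when $x-z=m-2y$.
\begin{itemize}
\item If $m$ is even, take $y=m/2$ and $x=z=0$; this gives the nontrivial abelian square $ba^{m/2}a^{m/2}b$.
\item If $m$ is odd, take $y=(m\pm1)/2$ and use one extra $a$ on whichever side exists.
\end{itemize}
The same argument applies when the $b$'s are runs longer than one letter, because only the single $b$'s adjacent to $a^m$ are used. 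Consequently the word has at most one run of one letter between two runs of the other. Any $bab$-pattern with further letters on both sides is excluded in this way, and what survives is the form $a^k b^j a^m$ up to complement; the edge case $ba^mb$ alone is the complement of that form.

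Next I show that $j$ must be odd when $k,m\ge1$. A factor of $a^kb^ja^m$ that straddles both $a$-runs is $a^xb^ja^z$ with $x,z\ge1$. If both halves meet the $b$-run, balancing the counts forces $x=z$ and $j$ even; a split inside an $a$-run is impossible because one half would contain no $b$. Hence every nontrivial abelian square is avoided exactly when $j$ is odd, and in that case the converse holds as well.

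\textbf{Step 2 (counting).} Let $n=k+j+m$. Then $A\ge\max(k,m)\ge\lceil (n-j)/2\rceil$ and $B=j$, so
\[
\theta(w)\ge \Big\lfloor \tfrac12\Big\lceil \tfrac{n-j}{2}\Big\rceil\Big\rfloor+\Big\lfloor \tfrac j2\Big\rfloor .
\]
When $j=0$ (a unary word) this reads $\lfloor n/2\rfloor\ge\lfloor n/4\rfloor$. Otherwise I check the floors, considering $n \bmod 4$ and the parity of $j$, to obtain $\theta(w)\ge\lfloor n/4\rfloor$; roughly the bound is $(n-j)/4+j/2\ge n/4$ with a loss of at most $1$. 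For the small cases $j=1$ and $n\equiv 3$, the inequality $\lceil (n-1)/2\rceil\ge 2\lfloor n/4\rfloor$ settles it.

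\textbf{Main obstacle.} The delicate part is Step 1: making the exclusion of a second run airtight in every configuration, including runs at the ends of the word and the case where the surrounding runs are short. I would organise this by the number of runs $r$. For $r\ge4$ some factor $b a^m b$ has an $a$ on at least one side, or symmetrically some $a b^m a$ has a $b$ on at least one side, which produces a nontrivial abelian square. The cases $r\le3$ then give exactly the stated form.
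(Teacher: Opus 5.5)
\textbf{There is a genuine gap: your Step~1 lemma is false.} The words it wrongly discards are exactly the ones where the bound is tightest.

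The word $abbbaaab=ab^3a^3b$ has only the abelian squares $aa$ and $bb$ (check its even-length factors), so $\theta=2=\lfloor 8/4\rfloor$. Yet it has four runs and is not of the form $a^kb^ja^m$ up to complement. It is one of the paper's effective words. Another is $a^4b^5a^5b^4$ from the length-$18$ table, which attains $4=\lfloor 18/4\rfloor$.

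The faulty step is the claim that longer $b$-runs are harmless ``because only the single $b$'s adjacent to $a^m$ are used''. For odd $m$ your construction $a^xba^y\,a^{m-y}ba^z$ needs $x-z$ odd. So it needs an extra $a$ immediately outside one of those two $b$'s. When each flanking $b$-run is either longer than one letter or at the end of the word, no such $a$ exists. The same word refutes your closing claim that for $r\ge 4$ some $ba^mb$ or $ab^ma$ has an opposite letter beside it. The mechanism fails even with five runs: in $ab^3a^3b^3a$ it finds nothing, although the word contains the nontrivial abelian square $ab^3a\cdot a^2b^3$.

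The missing idea is the one in the paper's theorem on three runs of one letter: use the whole shorter flanking run. Suppose $a^m$ with $m$ odd is flanked by $b$-runs of lengths $j_1\le j_2$, and there is an $a$ just outside the shorter one. Then $ab^{j_1}a^mb^{j_1}$ (or its reverse) is a nontrivial abelian square. This limits each letter to two runs, but it cannot eliminate $a^hb^ia^jb^k$. Checking the remaining factors, such a word has only trivial abelian squares exactly when $i,j$ are odd, $h<j$ and $k<i$.

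So Step~2 needs a four-run case, where $\theta(w)=\lfloor j/2\rfloor+\lfloor i/2\rfloor$.
\begin{itemize}
\item Since $j$ is odd and $2j>|w|_a$, you get $\lfloor j/2\rfloor\ge\lfloor(|w|_a+2)/4\rfloor$. This is the effective-partition formula.
\item Likewise $\lfloor i/2\rfloor\ge\lfloor(|w|_b+2)/4\rfloor$.
\item The paper's lemma $\lfloor\frac{|w|_a+2}{4}\rfloor+\lfloor\frac{|w|_b+2}{4}\rfloor\ge\lfloor\frac{|w|}{4}\rfloor$ then gives $\theta(w)\ge\lfloor n/4\rfloor$.
\end{itemize}
Your counting for one, two and three runs is fine.

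The paper does not prove this theorem itself; it quotes it from Fici and Saarela. Still, the three-runs theorem and the effective-word material in the paper are exactly the pieces your argument lacks.
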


However the following theorem shows this result has limited applicability.
\begin{thm}
  If a binary word contains three or more powers of one letter separated by powers of the other letter then it contains a non-trivial abelian square.
  \begin{proof}
     Suppose the word $w$ contains the factor $a^{i_1}b^{j_1}a^{i_2}b^{j_2}a^{i_3}$ where each of $i_1$, $j_1$, $i_2$, $j_2$ and $i_3$ is a positive integer and suppose, without loss of generality, that $j_1 \le j_2$. If $i_2$ is even then $w$  contains the non-trivial abelian square $b^{j_1}a^{i_2}b^{j_1}$ and if $i_2$ is odd then $w$ contains the non-trivial abelian square $ab^{j_1}a^{i_2}b^{j_1}$.
  \end{proof}
\end{thm}

On the same theme we extend the Fici-Saarela conjecture to the following:

\begin{conj}  A binary word of length $n$ contains at least $\lfloor \frac{n}{4}\rfloor$ abelian squares and if it contains exactly $\lfloor \frac{n}{4}\rfloor$ abelian squares then all the abelian squares are trivial.
\end{conj}

We say $M(x,n)$ is the least number of abelian squares that can occur in a binary word $w$ with $|w|=n$ and $|w|_a=x$. This seems  difficult to work out in general but we can obtain precise values for some low values of $x$ and make conjectures about others.

\section{ $x =0$ and $x=1$}

\begin{thm}\label{t1} For positive integers $n$ we have
\begin{equation*}
M(0,n)=\lfloor n/2 \rfloor.
\end{equation*}
\begin{proof} The word $b^n$ contains abelian squares $b^2, b^4, \dots, b^{2\lfloor n/2\rfloor}$.  Hence result.
\end{proof}
\end{thm}

For $x=1$ we need the following easily established identity. If $n$ is an integer then
\begin{equation}\label{identity} \left\lfloor\frac{\lceil\frac{n-1}{2}\rceil }{2} \right \rfloor = \lfloor n/4 \rfloor\end{equation}

\begin{thm} For positive integers $n$ we have 
\begin{equation*}
                                       M(1,n)=\lfloor n/4 \rfloor.
\end{equation*} 
The only words with one $a$ and exactly $M(1,n)$ abelian squares are $b^{\lceil(n-1)/2\rceil}ab^{\lfloor(n-1)/2\rfloor}$ and its reverse.
\begin{proof}
  If $w$ is a binary word with $|w|=n$ and $|w|_a=1$ then it has the form $b^iab^{n-1-i}$ where $0 \le i \le n-1$.  Clearly either $i$ or $n-1-i$ is at least $\lceil(n-1)/2\rceil$, so $w$ contains the factor $b^{\lceil(n-1)/2\rceil}$, so as in Theorem~\ref{t1} it contains at least $\lfloor\lceil(n-1)/2\rceil/2\rfloor$ abelian squares. Hence $w$ contains $b^{\lceil(n-1)/2\rceil}$ and no larger power of $b$ if and only if $w$ is $b^{\lceil(n-1)/2\rceil}ab^{\lfloor(n-1)/2\rfloor}$ or its reverse.   The statement of the theorem then follows from (\ref{identity}). \end{proof} \end{thm}

\section{ $x=2$}
Table 1 gives length $n$ words containing 2 $a$'s and containing the least possible number of abelian squares.
\begin{table}
$\begin{array}{lcl}
\text{n} & \text{Number} & \text{Words achieving this}\\
2& 1& {aa}\\
3& 0& {aba}\\
4& 1& {abab,baba}\\
5& 1& {abbba}\\
6& 2& {ababbb, abbbab, babbba, bbbaba}\\
7& 2& {abbbabb, abbbbba, bbabbba}\\
8& 3& {ababbbbb, abbbabb, abbbbbab, babbbbba, bbbabbba, bbbbbaba}\\
9& 3& {abbbbbabb, abbbbbbba, bbabbbbba}\\
10& 4& {ababbbbbbb, abbbabbbbb, abbbbbabbb, abbbbbbbab, babbbbbbba, 
bbbabbbbba, bbbbbabbba, bbbbbbbaba}\\
11& 4& {abbbbbabbbb, abbbbbbbabb, abbbbbbbbba, bbabbbbbbba, 
bbbbabbbbba}\\
\end{array}$
\caption{Length $n$ words containing two $a$'s and the least possible number of distinct abelian squares.}
\end{table}

If $n=2$, then $w =aa$ has 1 abelian square. For words of length at least 3 we have the following.
\begin{thm}\label{t:2as}
Let $n \geq 3$ and $w$ be a binary word of length $n$ with $2$ $a$'s. Then $w$ contains at least $\left\lfloor \frac{n-2}{2}\right\rfloor$ abelian squares and $w$ contains exactly that many 
abelian squares if it or its reverse is of the form
$b^m a b^{n-m-2}a$ with $n-m$ odd and, if $m \geq \left\lfloor \frac{n-1}{2}\right\rfloor$ holds, then $m$ must also odd.  Thus, for $n \ge 3$,
\begin{equation*}
                                       M(2,n)=\lfloor (n-2)/2\rfloor.
\end{equation*} 
\end{thm}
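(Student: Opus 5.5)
We work directly with the structure of $w$. Since $|w|_a=2$, write $w=b^pab^qab^r$ with $p+q+r=n-2$, and let $L=\max(p,q,r)$. As in the table, we count distinct abelian squares. They split into two kinds.

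\emph{Trivial squares.} These are the distinct factors $b^{2i}$ with $2i\le L$, so there are exactly $\lfloor L/2\rfloor$ of them. No square can be $aa$ unless $q=0$, in which case we add one.

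\emph{Squares containing an $a$.} An abelian square has an even number of $a$'s, so a non-trivial square must contain both $a$'s, one in each half. Such a factor is $b^sab^qab^t$ with $0\le s\le p$ and $0\le t\le r$, and it is determined by the pair $(s,t)$. Let the half-length be $h=(s+q+t+2)/2$. The first $a$ lies in the first half iff $s+1\le h$, and the second $a$ lies in the second half iff $h<s+q+2$. So the conditions are
\[ s+t\equiv q \pmod 2,\qquad s\le q+t,\qquad t\le q+s+1 .\]
By symmetry (taking reverses) the second inequality can equally be read as $t\le q+s$ once the parity condition is imposed. Hence the number $N(p,q,r)$ of non-trivial abelian squares is the number of lattice points $(s,t)$ in the box $[0,p]\times[0,r]$ with $|s-t|\le q$ and $s+t\equiv q\pmod 2$. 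Therefore
\[\theta(w)=\lfloor L/2\rfloor+N(p,q,r)+[q=0].\]

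\emph{Lower bound.} The plan is to show $\lfloor L/2\rfloor+N\ge\lfloor (n-2)/2\rfloor$ by cases on which of $p,q,r$ equals $L$. By the reverse symmetry, assume $p\ge r$.

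If $q$ is even, the diagonal points $s=t$, $0\le t\le r$, all qualify, giving $N\ge r+1$. If $q$ is odd, the points $(t+1,t)$ with $t+1\le p$ qualify (or $(0,1)$-type points when $p=0$), giving $N\ge \min(p,r+1)$. Moreover, when $q$ is large, moving off the diagonal by $\pm 2,\pm4,\dots$ up to $q$ contributes extra points inside the box.

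I then compare $\lfloor L/2\rfloor+N$ with $\lfloor (p+q+r)/2\rfloor$.
\begin{itemize}
\item If $L=q$: the off-diagonal points give roughly $\min(p,q)/2$ choices of $s$ for each admissible $t$. Together with $t=0$ alone this already gives about $\lceil p/2\rceil$ points, and $\lfloor q/2\rfloor+\lceil p/2\rceil+(\text{contributions from } r)$ suffices.
\item If $L=p$: taking $t=0$ and $s\equiv q\pmod 2$ with $s\le\min(p,q)$ gives about $\min(p,q)/2$ points; taking $t\ge1$ adds more. Together with $\lfloor p/2\rfloor$ the bound follows.
\end{itemize}
Throughout, the parity bookkeeping (floors and the $+1$ slack) must be handled carefully. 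This case analysis is the main obstacle. I would first try the cleanest injection: restrict to the row $t=0$ and the column $s=0$, which already give about $\lceil\min(p,q)/2\rceil+\lceil\min(q,r)/2\rceil$ points, and check that this alone beats $\lfloor (n-2)/2\rfloor-\lfloor L/2\rfloor$ in every case.

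\emph{Extremal words.} For $w=b^mab^{n-m-2}a$ we have $r=0$, $p=m$ and $q=n-m-2$. Since $t=0$ is forced, $N$ counts the $s\in[0,\min(m,q)]$ with $s\equiv q\pmod 2$. With $n-m$ odd, $q$ is odd and $q\neq0$, so $N=\lfloor(\min(m,q)+1)/2\rfloor$ and
\[\theta(w)=\lfloor \max(m,q)/2\rfloor+\lfloor (\min(m,q)+1)/2\rfloor .\]
This equals $\lfloor (m+q)/2\rfloor=\lfloor(n-2)/2\rfloor$ except when $\max(m,q)=m$ is even and the minimum $q$ is odd. That exceptional situation is excluded exactly by the stated condition that $m$ be odd once $m\ge\lfloor (n-1)/2\rfloor$. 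A short check of the boundary case $m=q\pm1$ confirms this. Combining the lower bound with these examples gives $M(2,n)=\lfloor (n-2)/2\rfloor$ for $n\ge3$.
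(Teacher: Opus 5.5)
Your reduction is sound. The pairs $(s,t)$ with $0\le s\le p$, $0\le t\le r$, $|s-t|\le q$, $s+t\equiv q \pmod 2$ form exactly the paper's set $\mathcal{I}$. Your check that $b^mab^{n-m-2}a$ attains the bound is also correct, and more explicit than the paper's: the sum $\lfloor M/2\rfloor+\lfloor(\mu+1)/2\rfloor$ exceeds $\lfloor (M+\mu)/2\rfloor$ only for $M$ even and $\mu$ odd, and $m\ge q$ is equivalent to $m\ge\lfloor (n-1)/2\rfloor$. There are two small slips. The inequality $t\le q+s+1$ sharpens to $t\le q+s$ by parity, not by reversal. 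And when $q=0$ the point $(0,0)$ already is the factor $aa$, so the extra term $[q=0]$ double counts; this is harmless since you never use it.

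The genuine gap is the lower bound, which is the heart of the theorem. You leave it as a plan with ``roughly'' and ``about''. Worse, the one concrete route you commit to, counting only the row $t=0$ and the column $s=0$, fails. For $w=b^3aab^3$ ($p=r=3$, $q=0$) both contribute only the point $(0,0)$, giving $\lfloor L/2\rfloor+1=2<3=\lfloor (n-2)/2\rfloor$. More generally it breaks whenever both outer runs are sufficiently longer than the middle one.

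What is missing is a quantitative use of the points near the diagonal, which you mention but never combine with the row. Take $p\ge r$ and $L=p$:
\begin{itemize}
\item The row $t=0$ contributes at least $\lfloor (q+1)/2\rfloor$ points, since $\min(p,q)=q$.
\item Each row $t=1,\dots,r$ contributes one further point: $(t,t)$ if $q$ is even, and $(t+1,t)$ if $q$ is odd (or $(t-1,t)$ when $t=r=p$).
\end{itemize}
Hence $N\ge\lfloor (q+1)/2\rfloor+r$, and $\lfloor p/2\rfloor+\lfloor (q+1)/2\rfloor+r\ge\lfloor (p+q+r)/2\rfloor$. This is precisely the paper's argument in the outer-dominant case.

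When $L=q$, your row-plus-column count does suffice. It gives $\lfloor p/2\rfloor+\lfloor r/2\rfloor+1\ge\lfloor (p+r)/2\rfloor$ for even $q$, and $\lceil p/2\rceil+\lceil r/2\rceil\ge\lceil (p+r)/2\rceil$ for odd $q$. The paper instead counts the whole grid there, obtaining $(k+1)\lfloor (i+1)/2\rfloor$ points.

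Without the $L=p$ argument, neither the bound $\theta(w)\ge\lfloor (n-2)/2\rfloor$ nor $M(2,n)=\lfloor (n-2)/2\rfloor$ is established.
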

\begin{proof}
Let $w$ be a binary word of length $n$ containing exactly two $a$'s. We can represent $w$ uniquely as:
\[ w = b^i a b^j a b^k \]
where $i, j, k \ge 0$ and $i + j + k + 2 = n$.

Let $S$ be the total number of distinct abelian squares in $w$. We partition this into $S_0$ (trivial squares containing no $a$'s) and $S_2$ (non-trivial squares containing both $a$'s). Thus, $S = S_0 + S_2$.

A trivial abelian square in $w$ is a factor of the form $b^{2m}$ for $m \ge 1$. The total number of such squares is strictly bounded by the longest contiguous run of $b$'s in the word. Therefore:
\[ S_0 = \max \left( \lfloor i/2 \rfloor, \lfloor j/2 \rfloor, \lfloor k/2 \rfloor \right) \]

A non-trivial abelian square must contain both $a$'s. Thus, it must be a factor of the form $f(x, z) = b^x a b^j a b^z$. For $f(x, z)$ to be a factor of $w$, we must have $0 \le x \le i$ and $0 \le z \le k$.

For $f(x, z)$ to be an abelian square, two conditions must be met:
\begin{itemize}
    \item The total length $|f(x, z)| = x + j + z + 2$ must be even. This implies $x + z \equiv j \pmod 2$.
    
    \item Both halves of equal length must contain exactly one `a'. This means the midpoint of $f(x,z)$ must land within the inner $b^j$ block. Let the left half consist of $b^x a b^{y_1}$ and the right half $b^{y_2} a b^z$, where $y_1 + y_2 = j$. For these halves to be abelian equivalent, their Parikh vectors must match, and since each already has one $a$, we equate the $b$'s:
    \[ x + y_1 = y_2 + z \]
    Substituting $y_2 = j - y_1$, we get $2y_1 = j + z - x$. Because $y_1$ is the length of a prefix of the $b$'s inside the middle block, it must be an integer satisfying $0 \le y_1 \le j$:
    \[ 0 \le \frac{j + z - x}{2} \le j \implies -j \le z - x \le j \implies |x - z| \le j \]
\end{itemize}

Thus, $S_2$ is exactly the cardinality of the integer pair set:
\[ \mathcal{I} = \{ (x, z) \in \mathbb{Z}^2 \mid 0 \le x \le i,\ 0 \le z \le k,\ x+z \equiv j \pmod 2,\ |x-z| \le j \} \]

We must prove that $S_0 + S_2 \ge \lfloor \frac{i+j+k}{2} \rfloor$. By symmetry, we can assume without loss of generality that $i \ge k$.

Let us isolate a subset of valid pairs in $\mathcal{I}$ to establish a floor for $S_2$. Consider only the pairs where $z = 0$. The valid $x$ values corresponding to $z=0$ are integers $x$ such that $0 \le x \le \min(i, j)$ and $x \equiv j \pmod 2$. 

Let $m = \min(i, j)$. The number of integers in $[0, m]$ sharing the parity of $j$ is given by $\lfloor \frac{m+1}{2} \rfloor$ if $j$ is odd, and $\lfloor \frac{m}{2} \rfloor + 1$ if $j$ is even. In both cases, this count is at least $\lfloor \frac{m+1}{2} \rfloor$. Therefore, $S_2 \ge \lfloor \frac{\min(i,j)+1}{2} \rfloor$.

Now we analyze the total squares $S_0 + S_2$ based on which $b$ block dominates:

\begin{itemize}
    \item \textbf{The middle block dominates ($j \ge i \ge k$).} \\
    Here, $S_0 = \lfloor j/2 \rfloor$. Because $j \ge i$, the boundary condition $|x-z| \le j$ is trivially satisfied for all $x \in [0, i]$ and $z \in [0, k]$ (since $|x-z| \le i \le j$). Thus, $\mathcal{I}$ includes every pair in the grid $[0, i] \times [0, k]$ that satisfies the parity condition $x+z \equiv j \pmod 2$. For any fixed $z$, at least $\lfloor\frac{i+1}{2}\rfloor$ of the $x$ values will match the parity, giving at least $\lfloor \frac{i+1}{2} \rfloor$ valid $x$ for each $z$. Summing over all $k+1$ values of $z$, we get $S_2 \ge (k+1)\lfloor \frac{i+1}{2} \rfloor$. \\
    Summing the total: 
    \[ S_0 + S_2 \ge \lfloor j/2 \rfloor + (k+1)\lfloor \frac{i+1}{2} \rfloor \ge \lfloor \frac{i+j+k}{2} \rfloor \]
    satisfying the bound.

    \item \textbf{The outer block dominates ($i \ge j$).} \\
    Here, $S_0 = \lfloor i/2 \rfloor$. Using our $z=0$ baseline bound from earlier, since $i \ge j$, we have $\min(i, j) = j$. Therefore, $S_2 \ge \lfloor \frac{j+1}{2} \rfloor$. \\
    Summing the total we get: 
    \[ S_0 + S_2 \ge \lfloor i/2 \rfloor + \lfloor \frac{j+1}{2} \rfloor \ge \lfloor \frac{i+j}{2} \rfloor \]
    If $k = 0$ (meaning one $a$ is at the boundary), this exactly achieves the minimum $S \ge \lfloor \frac{i+j}{2} \rfloor = \lfloor \frac{n-2}{2} \rfloor$. If $k \ge 1$, then setting $z=0$ ignores many valid pairs. Since $i\ge k$, every increment of $z$ up to $k$ adds at least one valid pair, either $(z,z)$ when $j$ is even, or $(z\pm 1,z)$ (covering also the cases $z=0$ and $z=k=i$) when $j$ is odd. Each such pair increments $S_2$, pushing the total sum to be greater than the $\lfloor \frac{i+j+k}{2} \rfloor$ minimum.
\end{itemize}

In all configurations, the word $w$ must contain at least $\lfloor (n-2)/2 \rfloor$ distinct abelian squares. Furthermore, the bounds show that this absolute minimum is only achievable when $k=0$ (or $i=0$, respectively), pushing one $a$ to the word's boundary and matching the exact parity conditions stated above. 
\end{proof}

\section{$x=3$} 
Table \ref{table 2}  gives length $n$ words containing exactly 3 $a$'s and containing the least possible number of abelian squares.\\

\begin{table} \label{table 2}
$\begin{array}{lcl}
\text{n} & \text{Number} & \text{Words achieving this}\\
5&   1&  {baaab}  \\
6&   2&    {aaabbb, aabbba, ababab, abbbaa, baaabb, bababa, bbaaab, bbbaaa} \\ 
7&   2&    {baaabbb, bababab, bbaaabb, bbbaaab} \\ 
8&   2&  {bbaaabbb, bbbaaabb}  \\
9&   2&   {bbbaaabbb} \\ 
10&  3&   {bbaaabbbbb, bbbaaabbbb, bbbbaaabbb, bbbbbaaabb} \\ 
11&  3&   {bbbaaabbbbb, bbbbaaabbbb, bbbbbaaabbb}  \\
12&  3&    {bbbbaaabbbbb, bbbbbaaabbbb} \\ 
13&  3&   {bbbbbaaabbbbb}  \\
14&   4&    {bbbbaaabbbbbbb, bbbbbaaabbbbbb, bbbbbbaaabbbbb, bbbbbbbaaabbbb} \\ 
15&   4&    {bbbbbaaabbbbbbb, bbbbbbaaabbbbbb, bbbbbbbaaabbbbb}  \\
16&   4&   {bbbbbbaaabbbbbbb, bbbbbbbaaabbbbbb}  \\
17&   4&    {bbbbbbbaaabbbbbbb} \\ 
  \end{array}  $
  \caption{Length $n$ words containing three $a$'s and the least possible number of distinct abelian squares}\label{3a }
  
 \end{table} 

\begin{thm} (a) If $w$ is a word of length $n$ on the alphabet $\{a,b\}$ with $|w|_a=3$  then it contains at least $$\lfloor \frac{\lceil \frac{n-3}{2} \rceil}{2}\rfloor+1=\lfloor \frac{\lceil \frac{n+1}{2} \rceil}{2}\rfloor=\left\lfloor \frac{n+2}{4}\right\rfloor$$ abelian squares.  This bound is achieved by \begin{equation} \label{best 3a word}  b^{\lfloor (n-3)/2 \rfloor}a^3b^{\lceil (n-3)/2 \rceil}. \end{equation}
(b) The bound is also achieved by other words.  The full set is as follows,\\
if $n \equiv 0 \pmod{4}$ then $w=b^{(n-4)/2}a^3b^{(n-2)/2}$ or its reverse,\\
if  $n \equiv 1 \pmod{4}$ then $w=b^{(n-3)/2}a^3b^{(n-3)/2}$,\\
if $n \equiv 2 \pmod{4}$ then $w=b^{(n-6)/2}a^3b^{n/2}$, $b^{(n-2)/2}a^3b^{(n-4)/2}$ or one of their reverses,\\
if $n \equiv 3 \pmod{4}$ then $w=b^{(n-3)/2}a^3b^{(n-3)/2}$, $b^{(n-1)/2}a^3b^{(n-5)/2}$  or its reverse.

\begin{proof}
The cases when $n\leq 10$ can easily be checked.
So we assume that $n \ge 11$.  Let $w$ be a binary word of length $n$ with $3$ a's at positions $p$, $q$ and $r$ where $$1 \le p<q<r \le n.$$ The prefix $w[1..r-1]$  contains exactly two $a$'s. If its length is at least $\lceil \frac{n+5}{2} \rceil$  then by Theorem \ref{t:2as} it contains at least
  \[ \left\lfloor\frac{\lceil \frac{n+5}{2} \rceil-2}{2}\right\rfloor = \left\lfloor\frac{\lceil \frac{n+1}{2} \rceil}{2}\right\rfloor \]
abelian squares and contains exactly that many only if $w[1..r-1]$ is $b^m a b^{n-m-2}a$ or its reverse, where $n-m$ is odd. If $w[1..r-1]=b^m a b^{n-m-2}a$, then $w[1..r]=b^m a b^{n-m-2}aa$ which contains the abelian square $a^2$ and if $w[1..r-1]=a b^{n-m-2} a b^{m}$, then $w[1..r]=a b^{n-m-2} a b^{m}a$ contains both $ab^m a$ and $bab^ma$, one of which is an abelian square. In either case, $w[1..r]$ contains an abelian square not in $w[1..r-1]$ and so $w$ contains more than $\left\lfloor\frac{\lceil \frac{n+1}{2} \rceil}{2}\right\rfloor $ abelian squares 
  and the statement of the theorem holds. 
  
 So we assume instead that $ r-1 < \lceil \frac{n+5}{2} \rceil$ so that
  \[ r \le \left\lceil \frac{n+5}{2} \right\rceil. \] 
  Now consider the suffix $w[p+1..n]$ which contains exactly two $a$'s. By similar reasoning to that above we conclude that if a counterexample exists we must have 
  \[ p \ge n - \left\lceil \frac{n+5}{2} \right\rceil +1 \]
and  so  all three $a$'s lie in the factor $w'=w[ n - \lceil \frac{n+5}{2}\rceil+1..\lceil \frac{n+5}{2} \rceil] $.  This factor has length 5 or 6 depending on the parity of $n$. 
  Let $y = ab^iab^ja$ be the smallest factor of $w'$ containing the three $a$'s, where $i$ and $j$ are non-negative. As $w'$ has length at most $6$, $y$ also has length at most $6$ and so $i+j \leq 3$. 
  Then $w= b^s y b^t$, where given that $y$ is contained in $w'$ means $s,t \geq \left\lfloor \frac{n-5}{2}\right\rfloor$. 
 To complete the proof we must consider the possible positions of the three $a$'s in $y$. We have two cases.
  
\noindent \textit{Case 1: } $i=0=j$ and so $y=aaa$. Then $w=b^sa^3b^t$, with $n-3=s+t$ which contains $\left\lfloor \frac{\max\{s,t\}}{2} \right\rfloor +1$ abelian squares. So the number of abelian squares is minimised by the word in (\ref{best 3a word}) and the other words in part (b) above.
\\

\noindent \textit{Case 2:} $i$ and $j$ are not both 0. 
As we have assumed that $|w|\geq 11$, $s,t \geq 3$ and 
$w = b^sa b^iab^jab^t$ contains the abelian squares $b^i a .b^i a$ and $ab^j .ab^j$ along with  $b^{i} a b^j .a b^{i+j}$ if $i\neq 0$ and $b^{i+j}a b^i ab^j$  if $j\neq 0$, all of which are necessarily distinct.
Also $w$ has $\left\lfloor \frac{\max\{s,t\}}{2} \right\rfloor $ abelian squares consisting of only $b$'s.
It follows that $w$ has at least
\[
\left\lfloor \frac{ \left\lfloor  \frac{n-5}{2} \right\rfloor}{2}\right\rfloor +3 =\left\lfloor \frac{ \left\lfloor  \frac{n-5}{2} \right\rfloor +4}{2}\right\rfloor+1 =
\left\lfloor \frac{ \left\lfloor  \frac{n+3}{2} \right\rfloor }{2}\right\rfloor+1 > \left\lfloor \frac{ \left\lceil  \frac{n+1}{2} \right\rceil }{2}\right\rfloor 
\]
abelian squares.

\noindent
These two cases cover all the possible values of $i$ and $j$ so we are done.
%
\end{proof}
\end{thm}

\section{Case 5: $3 < x <n-2$} 

    Table 3 gives some examples of length 18 words with specified numbers of $a$'s and $b$'s which contain the least possible numbers of abelian squares.   Ignore the cases where we don't have $3<x<n-2$ as these are covered by earlier results (call them \emph{boundary cases}). Other cases have the form $a^hb^ia^jb^k$. These are not the only words achieving the bound but we believe there's always an example with this form. Below we make a conjecture about the values of $h$, $i$, $j$ and $k$ for given $|w|_a$ and $|w|_b$.\\
   
\begin{table}   
      $\begin{array}{ccl}\\
 \text{Number of $a$'s} & \text{Number of abelian squares} & \text{Sample word} \\
 0 & 9 & b^{18}\\
 1 & 4 & b^9ab^8\\
 2 & 8 & abab^{15}\\
 3 & 5 & b^6a^3b^9\\
 4 & 5 & ab^7a^3b^7\\
 5 & 4 & a^2b^7a^3b^6\\
 6 & 5 & ab^7a^5b^5\\
 7 & 5 & a^2b^7a^5b^4\\
 8 & 5 & a^3b^7a^5b^3\\
 9 & 4 & a^4b^5a^5b^4\\
 10 & 5 & a^3b^5a^7b^4\\
 11 & 5 & a^4b^5a^7b^2\\
 12 & 5 & a^5b^5a^7b\\
 13 & 4 & a^5b^3a^7b^2\\
 14 & 5 & a^7b^3a^7b\\
 15 & 5 & a^9b^3a^6\\
 16 & 8 & a^{15}bab\\
 17 & 4 & a^9ba^8\\
 18 & 9 & a^{18}
 \end{array}$
 \caption{Length 18 words with specified number of $a$'s and least possible number of abelian squares.}
 \end{table}
   
   \begin{defn}
   If $n$ is a positive integer with $n \ge 4$ then an \emph{effective partition} of $n$ is the ordered pair $e(n)=[p,q]$ where $p$ and $q$ are positive integers satisfying\\
   (a) $p+q=n$,\\
   (b) $q>p$,\\
   (c) $q$ is odd, and\\
   (d) $q$ is as small as possible consistent with (a), (b)  and (c).\\
   \end{defn}
   These conditions uniquely define $e(n)$. For example, $e(10)=[3,7]$.
   
 \begin{thm} \label{values of p and q} Using the notation of the definition and for $n\ge 4$,
  \begin{eqnarray*}
    p &=& n-2\lfloor \frac{n+2}{4} \rfloor-1 \\
    q &=& 2\lfloor \frac{n+2}{4} \rfloor+1 \\
  \end{eqnarray*}
 \begin{proof}
   We must show that the four parts of the definition of $e$ are satisfied.\\
   (a) Clearly $p+q =n$,\\
   (b) $q-p= 4\lfloor \frac{n+2}{4} \rfloor-n+2 > 4(\frac{n+2}{4}-1)-n+2=0$ so $q>p$,\\
   (c) $q$ is clearly odd,\\
   (d) if $q$ is replaced with a smaller odd number $q'$ then we'd have $q' \le q-2$ to satisfy (c) and $p$ would be replaced by $p'$ where $p'\ge p+2$ to satisfy (a), but then
   \begin{eqnarray*}
     q'-p' &\le & 2\lfloor \frac{n+2}{4} \rfloor-1-(n-2\lfloor \frac{n+2}{4} \rfloor+1) \\
      &=& 4\lfloor \frac{n+2}{4} \rfloor -n-2\\
      &\le & 0
   \end{eqnarray*} so we'd have $q' \le p'$ contradicting (b).  So $q$ cannot be replaced with a smaller number without contradicting one of (a), (b) and (c) so (d) is satisfied.
 \end{proof}
 \end{thm}
 
 We'll need the following Lemma.
 \begin{lem}\label{effective word inequality} If $m$ and $n$ are positive integers then
 \[\lfloor \frac{m+2}{4}\rfloor + \lfloor\frac{n+2}{4}\rfloor \ge \lfloor \frac{m+n}{4}\rfloor.
 \]
 \begin{proof}
   Clearly changing the value of $m$ or $n$ by 4 will not alter the direction of the inequality.  We may assume, therefore, that $m$ and $n$ come from the set $\{0,1,2,3\}$.  It is then easy to check all possibilities.
 \end{proof}
 \end{lem}

    We says that $w$ is an \emph{effective word} if it has the form $a^hb^ia^jb^k$ where $[h,j]$ and $[k,i]$ are the effective partitions of $|w|_a$ and $|w|_b$, with $h<j$ and $k<i$.

   \begin{thm}
   (a) The only abelian squares inside the effective word $w=a^hb^ia^jb^k$  are trivial.\\
   (b) The number of distinct abelian squares in the word is $\lfloor \frac{|w|_a+2}{4} \rfloor +\lfloor \frac{|w|_b+2}{4} \rfloor$.\\
   (c) This number  is at least $\lfloor |w|/4 \rfloor$.
   \begin{proof}
     (a) Write $w$ as $A_1B_1A_2B_2$ where $A_1=a^h$ and so on. There can be no abelian square beginning in $A_1$ and ending in $B_1$ as one half would be a power of a single letter while the other half would contain at least one occurrence of the other letter.  Similarly there is no abelian square beginning in $B_1$ and ending in $A_2$, nor one beginning in $A_2$ and ending in $B_2$. A factor beginning in $A_1$ and ending in $A_2$ contains the whole of odd length $B_1$ so cannot be an abelian square.  Similarly there is no abelian square beginning in $B_1$ and ending in $B_2$. Finally a factor beginning in $A_1$ and ending in $B_2$ will contain either the whole of $B_1$ or the whole of $A_2$ in one of its halves.  This cannot be an abelian square since $|A_2|>|A_1|$ and $|B_1|>|B_2|$.  Hence the only abelian squares in $w$ lie entirely inside one of $A_1$, $B_1$, $A_2$ and $B_2$ and are therefore trivial.\\
     
     \noindent (b) Since $j>h$ then number of abelian squares of the form $a^{2m}$ is $\lfloor j/2 \rfloor$. Using Theorem \ref{values of p and q} we see that this is $\lfloor \frac{|w|_a+2}{4} \rfloor.$ Similarly the number of abelian squares of the form $b^{2m}$ is  $\lfloor \frac{|w|_b+2}{4} \rfloor$, and part (b) of the theorem follows. \\
     
     \noindent (c) This follows from Lemma \ref{effective word inequality}.
   \end{proof}
   \end{thm}
   
   \begin{conj} If $w$ is a non-boundary binary word,  $e(|w|_a)=[i,k]$ and $e(|w|_b)=[j,k]$ then $w$ contains at least $\lfloor i/2 \rfloor +\lfloor j/2 \rfloor$ abelian squares.
   \end{conj}
   
\section{Discussion} We have found the minimum number of abelian squares in a word of length $n$ on alphabet $\{a,b\}$ which contains 0,1,2 or 3 occurrences of the letter $a$, and by complementation,  $n$, $n-1$, $n-2$ or $n-3$ occurrences.  We would like to settle the cases in between. We used the two $a$ bound to establish the three $a$ bound.  This suggests that an induction might be possible. Another approach might be to show that as soon as a word  contains one non-trivial abelian square it must contain sufficiently many others to reach the $\lfloor n/4 \rfloor$ bound. Coupled with Theorem \ref{FSthm} this would lead to a proof of the conjecture.

There are a number of open questions related to Fici and Saarela's Conjecture \ref{FSconj}. See, for instance, the surveys \cite{JS} and \cite{FP}.  One variation is to count inequivalent abelian squares rather than distinct abelian squares. Two abelian squares are equivalent if they have the same Parikh vectors. Thus $abba$ and $abab$ are distinct but equivalent.  Fraenkel and Simpson \cite{FSP} used a straightforward approach to show that the number of inequivalent abelian squares in a circular binary word of length $2k+2$ is at least $k$, the bound being attained by $(ab)^{k+1}$.

\bibliography{}

 \end{document}